\documentclass[11pt,reqno]{amsart}
\setlength{\voffset}{-.25in}
\usepackage{amssymb,latexsym}
\usepackage{graphicx}
\usepackage{url}		%does nice formatting of URLs
\usepackage{multicol}

\textwidth=6.175in
\textheight=9.0in
\headheight=13pt
\calclayout

\makeatletter
\newcommand{\monthyear}[1]{%
  \def\@monthyear{\uppercase{#1}}}
\newcommand{\volnumber}[1]{%
  \def\@volnumber{\uppercase{#1}}}
\AtBeginDocument{%
\def\ps@plain{\ps@empty
  \def\@oddfoot{ \hfil \thepage}% \@monthyear
  \def\@evenfoot{\thepage \hfil }} %\@volnumber
\def\ps@firstpage{\ps@plain}
\def\ps@headings{\ps@empty
  \def\@evenhead{%
    \setTrue{runhead}%
    \def\thanks{\protect\thanks@warning}%
    \hfil}% \uppercase{The Fibonacci Quarterly}
  \def\@oddhead{%
    \setTrue{runhead}%
    \def\thanks{\protect\thanks@warning}%
    \hfill\uppercase{Trigonometric-type properties and the parity of balancing ...}}%
  \let\@mkboth\markboth
  \def\@evenfoot{%
    \thepage \hfil }% \@volnumber
  \def\@oddfoot{%
     \hfil \thepage}% \@monthyear
  }%
\footskip=25pt
\pagestyle{headings}%
}
\makeatother

\theoremstyle{definition}
  \newtheorem{df}{Definition}
  
\theoremstyle{plain}
   
   \newtheorem{thm}[df]{Theorem}
   
   \newtheorem{cor}[df]{Corollary}

 \setcounter{section}{0}
 \numberwithin{equation}{section}

%%%%%%%%%%%%%%%%%%%%%%%%%%%%%%
%%%%%%%%%%%%%%%%%%%%%%%%%%%%%%

\begin{document}

\monthyear{Month Year}
\volnumber{Volume, Number}
\setcounter{page}{1}

\title{Trigonometric-type properties and the parity of balancing, cobalancing, Lucas-balancing and Lucas-cobalancing numbers}
\author{Ng{\^o} Van Dinh}
\address{Department of Mathematics and Informatics\\
                Thai Nguyen University of Sciences\\
                Thai Nguyen, Vietnam}
\email{dinh.ngo@tnus.edu.vn}
\thanks{The author gratefully acknowledges support from the Thai Nguyen University of Sciences.}

\begin{abstract}
Balancing numbers $n$ are originally defined as the solution of the Diophantine  equation $1+2+\cdots+(n-1)=(n+1)+\cdots+(n+r)$, where $r$ is called the balancer corresponding to the balancing number $n$. By slightly modifying, $n$ is the cobalancing number with the cobalancer $r$ if $1+2+\cdots+n=(n+1)+\cdots+(n+r)$. Let $B_n$ denote the $n^{th}$ balancing number and $b_n$ denote the $n^{th}$ cobalancing number. Then $8B_n^2+1$ and $8b_n^2+8b_n+1$ are perfect squares. The $n^{th}$ Lucas-balancing number $C_n$ and the $n^{th}$ Lucas-cobalancing number $c_n$ are the positive roots of $8B_n^2+1$ and $8b_n^2+8b_n+1$, respectively. In this paper, we establish some trigonometric-type identities and some arithmetic properties concerning the parity of balancing, cobalancing, Lucas-balancing and Lucas-cobalancing numbers.
\end{abstract}

\maketitle

\section{Introduction}
While studying triangular numbers, Behera and Panda  \cite{BeheraPanda} introduced the notion of \textit{balancing numbers}. An integer $n\in \mathbb{Z}^+$ is a balancing number if
\begin{equation}\label{def_1}
1+2+\cdots+(n-1)=(n+1)+(n+2)+\cdots+(n+r),
\end{equation}
for some $r\in\mathbb{Z}^+$. The number $r$ in (\ref{def_1}) is called the \textit{balancer} corresponding to the balancing number $n$. Behera and Panda also found that $n$ is a balancing number if and only if $n^2$ is a triangular number, as well as, $8n^2+1$ is a perfect square. Though the definition suggests that no balacing number should be less than 2, we accept 1 as a balancing number being the positive square root of the square triangular number 1 \cite{PandaRay2005}. If $n$ is a balancing number then the positive root of $8n^2+1$ is called a \textit{Lucas-balancing number} \cite{Panda2009}.

Let $B_n$ and $C_n$ denote the $n^{th}$ balancing number and the $n^{th}$ Lucas-balancing number, respectively, and set $B_0=0, C_0=1$. Then we have the recurrence relations
\[B_{n+1}=6B_n-B_{n-1}, n\geq 1,\]
with $B_0=0, B_1=1$, and
\[C_{n+1}=6C_n-C_{n-1}, n\geq 1,\]
with $C_0=1, C_1=3$. These recurrence relations give the Binet formulas for balancing and Lucas-balancing numbers as follows:
\[B_1=1, B_2=6, B_n=\frac{\lambda_1^n-\lambda_2^n}{\lambda_1-\lambda_2}, \text{ for all } n\geq 0,\]
and
\[C_1=3, C_2=17, C_n=\frac{\lambda_1^n+\lambda_2^n}{2}, \text{ for all } n\geq 0,\]
where $\lambda_1=3+\sqrt{8}, \lambda_2=3-\sqrt{8}$.

By slightly modifying (\ref{def_1}), Panda and Ray \cite{PandaRay2005} defined \textit{cobalancing numbers} $n\in\mathbb{Z}^+$ as solutions of the Diophantine equation
\begin{equation*}
1+2+\cdots+(n-1)=(n+1)+(n+2)+\cdots+(n+r),
\end{equation*}
where $r$ is called the \textit{cobalancer} corresponding to $n$. An natural number $n$ is a cobalancing number if and only if $8n^2+8n+1$ is a perfect square. So we can accept $0$ is the first cobalancing number. Let $b_n$ be the $n^{th}$ cobalancing number. Then the $n^{th}$ Lucas-cobalancing number $c_n$ is the positive root of $8b_n^2+8b_n+1$. Moreover, we have the recurrence relations \cite{PandaRay2011}
\[b_1=0, b_2=2, b_{n+1}=6b_n-b_{n-1}+2, n\geq 2,\]
and
\[c_1=1, c_2=7, c_{n+1}=6c_n-c_{n-1}, n\geq 2.\]
The following formulas are the Binet forms for cobalancing and Lucas-cobalancing numbers, respectively,
\[b_n=\frac{\alpha_1^{2n-1}-\alpha_2^{2n-1}}{4\sqrt{2}}-\frac{1}{2} \text{ and } c_n=\frac{\alpha_1^{2n-1}+\alpha_2^{2n-1}}{2},\]
where $\alpha_1=1+\sqrt{2}$ and $\alpha_2=1-\sqrt{2}$.

Many interesting properties and important identities of balancing, cobalancing, Lucas-balancing and Lucas-cobalancing numbers are available in the literature.
Panda \cite{Panda2009} established two following identities which look like the trigonometric identities $\sin(x\pm y)=\sin x\cos y \pm \cos x\sin y$:

\begin{equation}\label{Panda1}
\displaystyle B_{n+m}=B_nC_m+B_mC_n
\end{equation}
and
\begin{equation*}
\displaystyle B_{n-m}=B_nC_m-B_mC_n.
\end{equation*}
In this work, we establish some more trigonometric type identities and we deduce from these identities the parity of balancing, cobalancing, Lucas-balancing and Lucas-cobalancing numbers.

%%%%%

\section{Main results}
Starting from Panda's idea, the following theorem give an identity which has the same type of the trigonometric identity $\displaystyle \sin x - \sin y=2\sin (\frac{x-y}{2})\cos(\frac{x+y}{2})$.

\begin{thm}\label{sin-sin-tq}
For $n,m$ are natural numbers such that $n\geq m$ and having the same parity, we have
$$B_{n}-B_{m}=2B_{\frac{n-m}{2}}C_{\frac{n+m}{2}}.$$
\end{thm}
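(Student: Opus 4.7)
The plan is to reduce this identity directly to Panda's addition and subtraction formulas for balancing numbers, which are already established in the introduction. Since $n$ and $m$ have the same parity, both $\frac{n+m}{2}$ and $\frac{n-m}{2}$ are nonnegative integers, so the right-hand side is well defined and the balancing/Lucas-balancing indices make sense.

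I would first set
\[
a = \frac{n+m}{2}, \qquad b = \frac{n-m}{2},
\]
so that $n = a+b$ and $m = a-b$, with $a, b \in \mathbb{Z}_{\geq 0}$. The goal then becomes to prove
\[
B_{a+b} - B_{a-b} = 2 B_{b} C_{a}.
\]

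Next I would apply Panda's identity (\ref{Panda1}) together with its stated subtraction counterpart. Writing
\[
B_{a+b} = B_{a} C_{b} + B_{b} C_{a}, \qquad B_{a-b} = B_{a} C_{b} - B_{b} C_{a},
\]
subtracting the second from the first immediately cancels the $B_{a}C_{b}$ terms and yields $B_{a+b} - B_{a-b} = 2 B_{b} C_{a}$, which upon substituting back $a = \frac{n+m}{2}$ and $b = \frac{n-m}{2}$ is exactly the desired identity.

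There is essentially no obstacle here; the whole content of the theorem is the trigonometric-style rewriting $x - y = \frac{x+y}{2} - (-\frac{x-y}{2}) \cdots$ made rigorous through the Binet-based addition formulas. The only point worth flagging is the parity hypothesis, whose sole role is to guarantee that $\frac{n\pm m}{2}$ are integers so that Panda's formulas can be invoked with integer indices; if one preferred a self-contained derivation, the same three lines can be recovered directly from the Binet formula $B_k = (\lambda_1^k - \lambda_2^k)/(\lambda_1 - \lambda_2)$ and $C_k = (\lambda_1^k + \lambda_2^k)/2$ by factoring $\lambda_1^{a+b} - \lambda_2^{a+b} - \lambda_1^{a-b} + \lambda_2^{a-b}$ and using $\lambda_1 \lambda_2 = 1$.
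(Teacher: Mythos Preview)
Your proof is correct. The substitution $a=\tfrac{n+m}{2}$, $b=\tfrac{n-m}{2}$ reduces the claim to $B_{a+b}-B_{a-b}=2B_bC_a$, and since $a\ge b\ge 0$ (because $a-b=m\ge 0$), Panda's addition and subtraction formulas from the introduction apply at integer indices and the subtraction kills the $B_aC_b$ term exactly as you say.

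This is, however, not how the paper proceeds. The paper proves the identity from scratch via the Binet formulas: it expands $2B_{(n-m)/2}C_{(n+m)/2}$ as a product of $(\lambda_1^{(n-m)/2}-\lambda_2^{(n-m)/2})/(\lambda_1-\lambda_2)$ and $(\lambda_1^{(n+m)/2}+\lambda_2^{(n+m)/2})$, multiplies out, and reads off $B_n-B_m$. Your route is shorter and more conceptual, since it shows the theorem is an immediate repackaging of Panda's already-quoted identities rather than an independent computation; the paper's route has the minor advantage of being self-contained (not relying on the unproved citation of Panda's formulas) and sets the template for the later theorems, all of which are done by the same direct Binet expansion.
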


\begin{proof}
Using Binet formulas, we have
\begin{align*}
2B_{\frac{n-m}{2}}C_{\frac{n+m}{2}}&=2.\frac{\lambda_1^{\frac{n-m}{2}}-\lambda_2^{\frac{n-m}{2}}}{\lambda_1-\lambda_2}.\frac{\lambda_1^{\frac{n+m}{2}}+\lambda_2^{\frac{n+m}{2}}}{2}\\
&=\frac{\lambda_1^{n}-\lambda_2^{n}}{\lambda_1-\lambda_2}-\frac{\lambda_1^{m}-\lambda_2^{m}}{\lambda_1-\lambda_2}\\
&=B_{n}-B_{m}.
\end{align*}
This completes the proof.
\end{proof}

%%%%%%%

\begin{cor}\label{sin-sin}
For $n,m$ are natural numbers such that $n\geq m$, we have
$$B_{2n}-B_{2m}=2B_{n-m}C_{n+m}.$$
\end{cor}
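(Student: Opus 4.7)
The plan is simply to invoke Theorem \ref{sin-sin-tq} with the substitutions $n \mapsto 2n$ and $m \mapsto 2m$. Since $2n$ and $2m$ are both even, they trivially have the same parity, and the inequality $n \geq m$ ensures $2n \geq 2m$, so the hypotheses of the theorem are satisfied.

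With these substitutions, the arguments on the right-hand side become $\frac{2n - 2m}{2} = n - m$ and $\frac{2n + 2m}{2} = n + m$, so the theorem yields
\[
B_{2n} - B_{2m} = 2 B_{\frac{2n - 2m}{2}} C_{\frac{2n + 2m}{2}} = 2 B_{n-m} C_{n+m},
\]
which is exactly the stated identity. There is no real obstacle here; the corollary is the even--even specialization of Theorem \ref{sin-sin-tq}, and the only thing to check is the parity hypothesis, which holds automatically.
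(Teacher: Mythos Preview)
Your proof is correct and follows exactly the paper's approach: the corollary is obtained as an immediate consequence of Theorem~\ref{sin-sin-tq} via the substitution $n\mapsto 2n$, $m\mapsto 2m$, with the parity hypothesis automatically satisfied.
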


\begin{proof}
This is an intermediate consequence of Theorem \ref{sin-sin-tq}.
\end{proof}

In Corollary \ref{sin-sin}, by taking $m=1$ we obtain a corollary of which \cite[Theorem 2.1]{Ray} is a particular case.

%%%%
\begin{cor} For $n\geq 1$, we have
$$B_{2n}-6=2B_{n-1}C_{n+1}.$$
\end{cor}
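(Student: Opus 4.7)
The plan is to read off this corollary directly from Corollary \ref{sin-sin} by specializing the parameter $m$. Corollary \ref{sin-sin} asserts that for natural numbers $n \geq m$,
$$B_{2n} - B_{2m} = 2 B_{n-m} C_{n+m}.$$
Setting $m = 1$ (which is permitted whenever $n \geq 1$, exactly the hypothesis of the corollary we want) yields
$$B_{2n} - B_{2} = 2 B_{n-1} C_{n+1}.$$

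Next, I would invoke the explicit value $B_{2} = 6$, which is recorded in the Binet-formula display for balancing numbers in the introduction (and is also immediate from the recurrence $B_{n+1} = 6 B_n - B_{n-1}$ with $B_0 = 0$, $B_1 = 1$). Substituting this into the displayed identity gives precisely
$$B_{2n} - 6 = 2 B_{n-1} C_{n+1},$$
which is the desired conclusion.

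There is essentially no obstacle here: the corollary is a one-line specialization of Corollary \ref{sin-sin} combined with the known value of $B_2$. The only thing worth checking is the edge case $n = 1$, where the identity reads $B_2 - 6 = 2 B_0 C_2 = 0$, consistent with $B_2 = 6$ and $B_0 = 0$, so the statement holds trivially at the boundary of the stated range.
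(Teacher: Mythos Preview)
Your proof is correct and follows exactly the approach indicated in the paper: the corollary is obtained from Corollary~\ref{sin-sin} by taking $m=1$ and using $B_2=6$.
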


%%%%%%
\begin{cor} Let $n,m$ be natural numbers such that $n\geq m$. Then
$$B_{2n}=2(B_{n-m}C_{n+m}+B_mC_m).$$
\end{cor}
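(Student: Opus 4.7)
The plan is to combine the immediately preceding Corollary~\ref{sin-sin} with Panda's doubling identity. Corollary~\ref{sin-sin} rearranges to
$$B_{2n}=B_{2m}+2B_{n-m}C_{n+m},$$
so the whole task reduces to recognizing the closed form of $B_{2m}$.

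For that, I would invoke Panda's identity (\ref{Panda1}), namely $B_{n+m}=B_nC_m+B_mC_n$, with the choice $n=m$. This immediately yields the doubling relation $B_{2m}=2B_mC_m$. Substituting back into the displayed equation gives
$$B_{2n}=2B_mC_m+2B_{n-m}C_{n+m}=2\bigl(B_{n-m}C_{n+m}+B_mC_m\bigr),$$
which is the desired identity.

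There is really no serious obstacle: both ingredients are already on hand, and the argument is essentially a one-line substitution. The only thing worth checking is that the hypotheses match up, i.e.\ that $n\geq m$ ensures $n-m$ is a nonnegative integer so that $B_{n-m}$ makes sense, and that taking $n=m$ in (\ref{Panda1}) is legitimate (it is, since the identity is proved for all natural $n,m$). If one preferred a self-contained verification, the same computation via Binet formulas used in Theorem~\ref{sin-sin-tq} would work: expand $2B_{n-m}C_{n+m}+2B_mC_m$, telescope the cross terms in $\lambda_1^{\,n\pm m}\lambda_2^{\,m}$ etc., and collapse to $(\lambda_1^{2n}-\lambda_2^{2n})/(\lambda_1-\lambda_2)=B_{2n}$; but routing through Corollary~\ref{sin-sin} and (\ref{Panda1}) is cleaner.
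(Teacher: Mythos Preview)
Your proof is correct and follows essentially the same route as the paper: the paper's proof simply says the corollary is an obvious consequence of Corollary~\ref{sin-sin} and identity~(\ref{Panda1}), which is exactly your combination of $B_{2n}=B_{2m}+2B_{n-m}C_{n+m}$ with the doubling relation $B_{2m}=2B_mC_m$ obtained from (\ref{Panda1}).
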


\begin{proof}
This corollary is an obvious consequence of corollaries \ref{sin-sin} and (\ref{Panda1}).
\end{proof}

%%%%%
With Theorem \ref{sin-sin-tq}, we can see the parity of balancing numbers.

\begin{cor}\label{parity-B_n}
For every integer $n\geq 0$, the balancing number $B_n$ and $n$ have the same parity.
\end{cor}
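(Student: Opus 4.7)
The plan is to deduce the parity statement directly from Theorem \ref{sin-sin-tq}, using $m=0$ or $m=1$ depending on the parity of $n$, so that the difference $B_n - B_m$ becomes visibly even.

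First, I would dispose of the base cases $n=0$ and $n=1$ by direct inspection: $B_0 = 0$ is even and $B_1 = 1$ is odd, matching the parities of $0$ and $1$ respectively. For $n \geq 2$, I would split on parity. If $n$ is even, then $n$ and $m=0$ have the same parity and $n \geq m$, so Theorem \ref{sin-sin-tq} gives
\[
B_n - B_0 = 2\, B_{n/2}\, C_{n/2},
\]
and since $B_0 = 0$, this yields $B_n = 2\, B_{n/2}\, C_{n/2}$, which is even. If $n$ is odd, then $n$ and $m=1$ have the same parity and $n \geq m$, so
\[
B_n - B_1 = 2\, B_{(n-1)/2}\, C_{(n+1)/2},
\]
and since $B_1 = 1$, this yields $B_n = 1 + 2\, B_{(n-1)/2}\, C_{(n+1)/2}$, which is odd.

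In each case $B_n$ has the same parity as $n$, completing the argument. There is no real obstacle here: the work is entirely in Theorem \ref{sin-sin-tq}, which already places a factor of $2$ in front of the product, so the conclusion is immediate once one chooses $m \in \{0,1\}$ matching the parity of $n$. The only point to check is that the theorem's hypotheses ($n \geq m$ and $n \equiv m \pmod 2$) are met, which they are by construction.
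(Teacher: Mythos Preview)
Your proof is correct and follows essentially the same approach as the paper: both use Theorem \ref{sin-sin-tq} to conclude that $B_n - B_m$ is even whenever $n$ and $m$ have the same parity, and then anchor to the base values $B_0=0$ and $B_1=1$. You spell out the choice $m\in\{0,1\}$ explicitly, while the paper phrases it as the general observation that same-parity indices yield same-parity balancing numbers, but the content is the same.
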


\begin{proof}
If $n,m$ are integers with the same parity then $B_n$ and $B_m$ have the same parity by Theorem \ref{sin-sin-tq}. On the other one, we have $B_0=0, B_1=1$ and $B_2=6$. It implies that $B_n$ and $n$ have the same parity.
\end{proof}

We also have an identity of balancing numbers which resembles the trigonometric identity $\displaystyle \sin x+\sin y=2\sin(\frac{x+y}{2})\cos(\frac{x-y}{2})$.

%%%%%%%%
\begin{thm}\label{sin+sin_tq}
Let $n,m$ be natural numbers such that $n\geq m$ and having the same parity. Then
$$B_{n}+B_{m}=2B_{\frac{n+m}{2}}C_{\frac{n-m}{2}}.$$
\end{thm}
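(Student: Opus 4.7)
The plan is to mimic verbatim the Binet-formula calculation used in the proof of Theorem~\ref{sin-sin-tq}, only now the ``cosine'' factor becomes $C_{(n-m)/2}$ instead of $C_{(n+m)/2}$, and the cross terms will assemble into $B_n+B_m$ rather than $B_n-B_m$. First I would note that the same-parity hypothesis guarantees that both $(n+m)/2$ and $(n-m)/2$ are non-negative integers, so $B_{(n+m)/2}$ and $C_{(n-m)/2}$ are well-defined by the recurrences/Binet formulas recalled in the introduction.

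Next I would expand
\[
2B_{\frac{n+m}{2}}C_{\frac{n-m}{2}} = \frac{\bigl(\lambda_1^{(n+m)/2}-\lambda_2^{(n+m)/2}\bigr)\bigl(\lambda_1^{(n-m)/2}+\lambda_2^{(n-m)/2}\bigr)}{\lambda_1-\lambda_2},
\]
and invoke the key arithmetic fact $\lambda_1\lambda_2=(3+\sqrt{8})(3-\sqrt{8})=1$. This collapses the two cross terms: writing $a=(n-m)/2$, we get $\lambda_1^{(n+m)/2}\lambda_2^{a}=\lambda_1^{m}(\lambda_1\lambda_2)^{a}=\lambda_1^{m}$, and symmetrically $\lambda_2^{(n+m)/2}\lambda_1^{a}=\lambda_2^{m}$. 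The numerator therefore reduces to $\lambda_1^n-\lambda_2^n+\lambda_1^m-\lambda_2^m$, and dividing by $\lambda_1-\lambda_2$ yields $B_n+B_m$, as desired.

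There is essentially no obstacle here beyond bookkeeping: the same-parity hypothesis eliminates the only thing that could go wrong (non-integer indices), and the identity $\lambda_1\lambda_2=1$ does all the algebraic work. The proof will be a single short display, structurally identical to the one already written for Theorem~\ref{sin-sin-tq}.
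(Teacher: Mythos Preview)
Your proposal is correct and follows essentially the same approach as the paper's own proof: both expand $2B_{(n+m)/2}C_{(n-m)/2}$ via the Binet formulas and simplify the resulting product to $B_n+B_m$. You make explicit the role of $\lambda_1\lambda_2=1$ in collapsing the cross terms, which the paper leaves implicit, but the argument is otherwise identical.
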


\begin{proof}
Using Binet formulas, we have
\begin{align*}
2B_{\frac{n+m}{2}}C_{\frac{n-m}{2}}&=2.\frac{\lambda_1^{\frac{n+m}{2}}-\lambda_2^{\frac{n+m}{2}}}{\lambda_1-\lambda_2}.\frac{\lambda_1^{\frac{n-m}{2}}+\lambda_2^{\frac{n-m}{2}}}{2}\\
&=\frac{\lambda_1^{n}-\lambda_2^{n}}{\lambda_1-\lambda_2}+\frac{\lambda_1^{m}-\lambda_2^{m}}{\lambda_1-\lambda_2}\\
&=B_{n}+B_{m}.
\end{align*}
This is what was to be shown.
\end{proof}

%%%%%%%%
\begin{cor}\label{sin+sin}
For $n,m$ are natural numbers such that $n\geq m$, we have
$$B_{2n}+B_{2m}=2B_{n+m}C_{n-m}.$$
\end{cor}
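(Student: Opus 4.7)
The plan is to obtain this corollary as a direct specialization of Theorem \ref{sin+sin_tq}. Since $2n$ and $2m$ are both even, they trivially have the same parity, and $2n \geq 2m$ whenever $n \geq m$, so the hypotheses of Theorem \ref{sin+sin_tq} are satisfied with the pair $(2n, 2m)$ in place of $(n, m)$.

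Applying the theorem then yields
\[
B_{2n} + B_{2m} = 2 B_{\frac{2n+2m}{2}} C_{\frac{2n-2m}{2}} = 2 B_{n+m} C_{n-m},
\]
which is exactly the claim. No further identity or induction is needed.

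There is essentially no obstacle here: the corollary is just the even-index case of the preceding theorem, rewritten by canceling the factors of $2$ in the indices. The only thing worth noting in the write-up is that the parity hypothesis of Theorem \ref{sin+sin_tq} is automatically verified, which is why the corollary can be stated for all $n \geq m$ without any parity restriction. I would therefore keep the proof to a single short sentence that invokes Theorem \ref{sin+sin_tq} with $(2n, 2m)$.
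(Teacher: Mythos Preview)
Your proposal is correct and follows exactly the same approach as the paper, which simply states that the corollary is a direct consequence of Theorem~\ref{sin+sin_tq}. Your write-up just makes the substitution $(n,m)\mapsto(2n,2m)$ explicit, which is a welcome clarification but not a different method.
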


\begin{proof}
This is a direct consequence of Theorem \ref{sin+sin_tq}.
\end{proof}

%%%%%
\begin{cor}
Let $n,m$ be natural numbers such that $n\geq m$. Then
\begin{itemize}
\item[i)] $B_{n-m}C_n+B_nC_{n-m}=B_{2n-m}$;
\item[ii)] $B_nC_{n-m}-B_{n-m}C_n=B_m$.
\end{itemize}
\end{cor}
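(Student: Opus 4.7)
The plan is to derive both identities as immediate specializations of Panda's addition and subtraction formulas for balancing numbers, namely $B_{p+q}=B_pC_q+B_qC_p$ (equation (\ref{Panda1})) and its companion $B_{p-q}=B_pC_q-B_qC_p$, both stated in the introduction. No new combinatorial work is needed; the only task is to choose the indices $p,q$ cleverly so that the left-hand sides collapse to $B_{2n-m}$ and $B_m$ respectively.

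For part (i), I would write $2n-m=n+(n-m)$ and apply the addition formula with $p=n$, $q=n-m$. This gives $B_{2n-m}=B_nC_{n-m}+B_{n-m}C_n$, which is exactly the claim. The hypothesis $n\geq m$ ensures $n-m$ is a nonnegative integer, so Panda's identity applies in the range it was stated.

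For part (ii), I would take $p=n$ and $q=n-m$ in the subtraction formula. Then $p-q=n-(n-m)=m$, and the formula directly yields $B_m=B_nC_{n-m}-B_{n-m}C_n$, which rearranges to the stated identity. Again the condition $n\geq m$ is what makes $q=n-m\geq 0$.

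There is essentially no obstacle here: both identities are two-line consequences of the already-quoted addition and subtraction formulas, and the only subtle point is to notice that the natural substitution to make is $q=n-m$ rather than $q=m$. If one wished to avoid appealing to the subtraction formula (which is stated but not explicitly proved in the excerpt), one could reprove part (ii) directly using the Binet expressions for $B_n$ and $C_n$ together with $\lambda_1\lambda_2=1$, since the cross-term cancellations work out identically to those used in the proofs of Theorems \ref{sin-sin-tq} and \ref{sin+sin_tq}.
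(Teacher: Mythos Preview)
Your argument is correct, but it is not the route the paper takes. The paper derives this corollary from the just-proved Theorems \ref{sin-sin-tq} and \ref{sin+sin_tq}: applying those with the pair $(2n-m,m)$ (which have the same parity) gives
\[
B_{2n-m}-B_m=2B_{n-m}C_n,\qquad B_{2n-m}+B_m=2B_nC_{n-m},
\]
and adding/subtracting these two equations yields (i) and (ii). In other words, the paper treats the corollary as a consequence of its new sum-to-product identities, whereas you go back to Panda's original addition and subtraction formulas $B_{p\pm q}=B_pC_q\pm B_qC_p$ with $p=n$, $q=n-m$. Your approach is more direct and makes the corollary essentially a restatement of Panda's identities; the paper's approach is slightly more roundabout but serves the expository purpose of tying the corollary to the trigonometric-type theorems it has just established.
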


\begin{proof}
By theorems \ref{sin-sin-tq} and \ref{sin+sin_tq}, we have
\[B_{2n-m}-B_m=2B_{n-m}C_n \text{ and } B_{2n-m}+B_m=2B_{n}C_{n-m}.\]
Hence we obtain the required identities.
\end{proof}

The following theorem shows that we have an identity of Lucas-balancing numbers which looks like the trigonometric identity 
$$\displaystyle\cos x+\cos y=2\cos(\frac{x+y}{2})\cos(\frac{x-y}{2}).$$ 
However, we have another which resembles, up to a scalar, the trigonometric identity 
$$\displaystyle\cos x-\cos y=-2\sin(\frac{x+y}{2})\sin(\frac{x-y}{2}).$$

%%%%%%

\begin{thm}\label{cosa+cosb_tq}
Let $n,m$ be natural numbers such that $n\geq m$ and having the same parity. Then
\begin{itemize}
\item[i)] $C_{n}+C_{m}=2C_{\frac{n+m}{2}}C_{\frac{n-m}{2}}$;
\item[ii)] $C_{n}-C_{m}=16B_{\frac{n+m}{2}}B_{\frac{n-m}{2}}$.
\end{itemize}
\end{thm}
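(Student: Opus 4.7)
The plan is to mimic, term by term, the Binet-formula arguments that worked so cleanly for Theorems \ref{sin-sin-tq} and \ref{sin+sin_tq}. The parity hypothesis on $n$ and $m$ makes $\tfrac{n+m}{2}$ and $\tfrac{n-m}{2}$ nonnegative integers, so the subscripts $C_{(n\pm m)/2}$ and $B_{(n\pm m)/2}$ are legitimate. All the simplification rests on two algebraic facts about $\lambda_1=3+\sqrt{8}$ and $\lambda_2=3-\sqrt{8}$: the product identity $\lambda_1\lambda_2=1$ and the squared-difference identity $(\lambda_1-\lambda_2)^2=32$.

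For part (i), I would substitute $C_k=(\lambda_1^k+\lambda_2^k)/2$ into
$$2\,C_{\frac{n+m}{2}}C_{\frac{n-m}{2}}=\tfrac{1}{2}\bigl(\lambda_1^{(n+m)/2}+\lambda_2^{(n+m)/2}\bigr)\bigl(\lambda_1^{(n-m)/2}+\lambda_2^{(n-m)/2}\bigr),$$
expand the four-term product, and use $\lambda_1\lambda_2=1$ to collapse the cross terms: $\lambda_1^{(n+m)/2}\lambda_2^{(n-m)/2}=\lambda_1^{m}$ and $\lambda_2^{(n+m)/2}\lambda_1^{(n-m)/2}=\lambda_2^{m}$. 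What remains is $\tfrac{1}{2}(\lambda_1^n+\lambda_2^n+\lambda_1^m+\lambda_2^m)=C_n+C_m$, which is the desired identity.

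For part (ii), I would carry out the parallel computation with $B_k=(\lambda_1^k-\lambda_2^k)/(\lambda_1-\lambda_2)$. Expanding
$$16\,B_{\frac{n+m}{2}}B_{\frac{n-m}{2}}=\frac{16}{(\lambda_1-\lambda_2)^2}\bigl(\lambda_1^{(n+m)/2}-\lambda_2^{(n+m)/2}\bigr)\bigl(\lambda_1^{(n-m)/2}-\lambda_2^{(n-m)/2}\bigr)$$
and applying $\lambda_1\lambda_2=1$ to the same pair of cross terms, the bracketed numerator collapses to $\lambda_1^n+\lambda_2^n-\lambda_1^m-\lambda_2^m=2(C_n-C_m)$. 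Combined with the prefactor $16/(\lambda_1-\lambda_2)^2=16/32=1/2$, the product equals $C_n-C_m$ on the nose.

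Neither part involves any real obstacle; the only point worth emphasizing is that the scalar $16$ appearing in (ii) is precisely $\tfrac{1}{2}(\lambda_1-\lambda_2)^2$, and it plays the role that the factor $-2$ plays in the trigonometric identity $\cos x-\cos y=-2\sin\tfrac{x+y}{2}\sin\tfrac{x-y}{2}$. This is exactly the source of the \emph{up to a scalar} qualifier that the authors insert when comparing (ii) with its trigonometric analogue: the sign flip and the magnitude are both explained by the bookkeeping with $\lambda_1-\lambda_2$ and $\lambda_1\lambda_2$ replacing the imaginary-exponential denominators $2i$ and $(2i)^2=-4$.
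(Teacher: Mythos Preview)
Your proof is correct and follows essentially the same Binet-formula expansion as the paper's own argument; the paper's version is terser (it skips over the explicit $\lambda_1\lambda_2=1$ step for the cross terms and computes $B_{(n+m)/2}B_{(n-m)/2}$ without the leading factor of $16$, then multiplies at the end), but the mechanics are identical. Your added remarks about why the scalar $16=\tfrac12(\lambda_1-\lambda_2)^2$ appears are a nice touch not present in the paper.
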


\begin{proof}
Continue using Binet formulas, we have
\begin{align*}
2C_{\frac{n+m}{2}}C_{\frac{n-m}{2}}&=2.\frac{\lambda_1^{\frac{n+m}{2}}+\lambda_2^{\frac{n+m}{2}}}{2}.\frac{\lambda_1^{\frac{n-m}{2}}+\lambda_2^{\frac{n-m}{2}}}{2}\\
&=\frac{\lambda_1^{n}+\lambda_2^{n}}{2}+\frac{\lambda_1^{m}+\lambda_2^{m}}{2}=C_{n}+C_{m}.
\end{align*}
The first identity is proved. To prove the second, we have
\begin{align*}
B_{\frac{n+m}{2}}B_{\frac{n-m}{2}}&=\frac{\lambda_1^{\frac{n+m}{2}}-\lambda_2^{\frac{n+m}{2}}}{\lambda_1-\lambda_2}.\frac{\lambda_1^{\frac{n-m}{2}}-\lambda_2^{\frac{n-m}{2}}}{\lambda_1-\lambda_2}\\
&=\frac{1}{(\lambda_1-\lambda_2)^2}(\lambda_1^{n}+\lambda_2^{n}-\lambda_1^{m}-\lambda_2^{m})\\
&=\frac{1}{32}(\lambda_1^{n}+\lambda_2^{n}-\lambda_1^{m}-\lambda_2^{m})=\frac{1}{16}(C_{n}-C_{m}).
\end{align*}
This implies the required identity.
\end{proof}

%%%%%%

\begin{cor}\label{cosa+cosb}
For $n,m$ are natural numbers such that $n\geq m$, we have
\begin{itemize}
\item[i)] $C_{2n}+C_{2m}=2C_{n+m}C_{n-m}$;
\item[ii)] $C_{2n}-C_{2m}=16B_{n+m}B_{n-m}$.
\end{itemize}
\end{cor}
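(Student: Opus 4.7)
The plan is to deduce the corollary as a direct specialization of Theorem \ref{cosa+cosb_tq}. The theorem requires two indices of the same parity; the key observation is that $2n$ and $2m$ are both even, so they automatically satisfy the parity hypothesis, regardless of the parities of $n$ and $m$ themselves. Moreover, $\frac{2n+2m}{2}=n+m$ and $\frac{2n-2m}{2}=n-m$, which are exactly the indices appearing on the right-hand side of the two identities we want to prove.

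So concretely, I would simply invoke Theorem \ref{cosa+cosb_tq} with the substitution $n \mapsto 2n$, $m \mapsto 2m$. Part (i) of the theorem instantly yields
\[
C_{2n}+C_{2m}=2C_{\frac{2n+2m}{2}}C_{\frac{2n-2m}{2}}=2C_{n+m}C_{n-m},
\]
and part (ii) yields
\[
C_{2n}-C_{2m}=16B_{\frac{2n+2m}{2}}B_{\frac{2n-2m}{2}}=16B_{n+m}B_{n-m}.
\]
The hypothesis $n\geq m$ ensures $2n\geq 2m$, so the parity condition and the size condition of Theorem \ref{cosa+cosb_tq} are both fulfilled.

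There is essentially no obstacle here: the entire content of the corollary is the observation that, when both indices are even, the parity assumption of the more general theorem is automatic, and one may write half-sums and half-differences without worrying about non-integrality. If a fully self-contained argument were preferred, one could alternatively redo the Binet computation from the proof of Theorem \ref{cosa+cosb_tq} with exponents $n+m$ and $n-m$ in place of $\frac{n+m}{2}$ and $\frac{n-m}{2}$, but this would be needlessly repetitive. A one-line proof citing Theorem \ref{cosa+cosb_tq} is all that is required.
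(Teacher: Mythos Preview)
Your proposal is correct and matches the paper's own proof, which simply states that the identities follow directly from Theorem \ref{cosa+cosb_tq}. Your explicit verification of the parity and size hypotheses under the substitution $n\mapsto 2n$, $m\mapsto 2m$ is exactly the intended one-line argument.
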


\begin{proof}
These identities directly follow from Theorem \ref{cosa+cosb_tq}.
\end{proof}

%%%%%%
\begin{cor}
Let $n,m$ be natural numbers such that $n\geq m$. Then
\begin{itemize}
\item[i)] $C_nC_{n-m}+8B_nB_{n-m}=C_{2n-m}$;
\item[ii)] $C_nC_{n-m}-8B_nB_{n-m}=C_{m}$.
\end{itemize}
\end{cor}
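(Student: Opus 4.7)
The plan is to mimic the proof of the previous corollary (the one giving the analogous identities for $B$ via Theorems \ref{sin-sin-tq} and \ref{sin+sin_tq}), but now invoking the two parts of Theorem \ref{cosa+cosb_tq} instead. Specifically, I substitute $N := 2n-m$ and $M := m$. Since $N+M = 2n$ is even, $N$ and $M$ have the same parity, so both parts of Theorem \ref{cosa+cosb_tq} apply to the pair $(N,M)$, and one checks that $(N+M)/2 = n$ and $(N-M)/2 = n-m$.

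With these substitutions Theorem \ref{cosa+cosb_tq} yields
\begin{align*}
C_{2n-m} + C_{m} &= 2 C_{n} C_{n-m}, \\
C_{2n-m} - C_{m} &= 16 B_{n} B_{n-m}.
\end{align*}
Adding these equations and dividing by $2$ gives identity (i), while subtracting the second from the first and dividing by $2$ gives identity (ii).

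I do not foresee any real obstacle: the argument is a direct linear combination, and the only point that needs to be mentioned is the parity check $N+M = 2n$ (which is why Theorem \ref{cosa+cosb_tq} is applicable without any extra hypothesis beyond $n \geq m$). The whole proof can be written in just a few lines.
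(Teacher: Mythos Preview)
Your argument is correct and is essentially the paper's own proof: the paper also applies Theorem \ref{cosa+cosb_tq} with the pair $(2n-m,m)$ to obtain $C_nC_{n-m}=\tfrac12(C_{2n-m}+C_m)$ and $8B_nB_{n-m}=\tfrac12(C_{2n-m}-C_m)$, and then adds and subtracts. Your explicit mention of the parity check $N+M=2n$ is a small clarification the paper leaves implicit.
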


\begin{proof}
By Theorem \ref{cosa+cosb_tq}, we have
\[C_nC_{n-m}=\frac{1}{2}(C_{2n-m}+C_m) \text{ and } 8B_nB_{n-m}=\frac{1}{2}(C_{2n-m}-C_m).\]
These identities imply the required identities.
\end{proof}

%%%%%%
Now, we can see the parity of Lucas-balancing numbers.

\begin{cor}
For all integer $n\geq 0$, the Lucas-balancing number $C_n$ is odd. Moreover, if $n,m$ are integers with the same parity then the difference between $C_n$ and $C_m$ is divisible by 16.
\end{cor}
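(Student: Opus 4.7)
The plan is to derive both claims as immediate applications of Theorem \ref{cosa+cosb_tq}, since that result has essentially packaged all the arithmetic we need.

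For the first assertion, I would invoke part (i) of Theorem \ref{cosa+cosb_tq}: whenever $n\geq m$ share parity, $C_n+C_m=2C_{(n+m)/2}C_{(n-m)/2}$ is manifestly even, so $C_n\equiv C_m\pmod 2$. Since every nonnegative integer shares parity with either $0$ or $1$, and since $C_0=1$ and $C_1=3$ are both odd, every $C_n$ is odd. (As a sanity check, one could alternatively observe that $C_n^2=8B_n^2+1$ is odd, forcing $C_n$ to be odd, or run a two-step induction on the recurrence $C_{n+1}=6C_n-C_{n-1}$ starting from $C_0,C_1$ odd; but routing through Theorem \ref{cosa+cosb_tq}(i) keeps the argument internal to this section.)

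For the second assertion, I would read off part (ii) of Theorem \ref{cosa+cosb_tq} directly: if $n$ and $m$ have the same parity, then assuming without loss of generality $n\geq m$ (otherwise swap the roles, which only changes the sign of $C_n-C_m$), we have
\[
C_n-C_m=16\,B_{(n+m)/2}B_{(n-m)/2},
\]
an integer multiple of $16$. Hence $16\mid C_n-C_m$, as required.

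There is no real obstacle here; the only thing to be careful about is the symmetry step in the second part, where one must remark that the hypothesis $n\geq m$ of Theorem \ref{cosa+cosb_tq} is harmless because divisibility by $16$ is invariant under negation. Everything else is a direct quotation of the preceding theorem.
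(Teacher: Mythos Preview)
Your proof is correct and follows essentially the same approach as the paper: both derive the corollary directly from Theorem~\ref{cosa+cosb_tq} together with the initial values $C_0=1$, $C_1=3$. The only cosmetic difference is that the paper invokes part~(ii) for both claims (divisibility by $16$ already forces equal parity), whereas you cite part~(i) for the oddness and part~(ii) for the divisibility; either routing works.
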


\begin{proof}
If $n,m$ are integers with the same parity then the difference between $C_n$ and $C_m$ is divisible by 16 by the second identity of Theorem \ref{cosa+cosb_tq}. This also means that $C_n$ and $C_m$ have the same parity. On the other hand, we have $C_0~=~1$,$C_1=3$, $C_2=17$. It implies that $C_n$ is odd for all $n$.
\end{proof}

We can not find identities for Lucas-balancing numbers which resemble the trigonometric identities $\displaystyle \cos(x\pm y)=\cos x\cos y \mp \sin x\sin y$. But we establish the following interesting theorem.

%%%%%%

\begin{thm}\label{cos(a+b)}
Let $n,m$ be natural numbers such that $n\geq m$. Then
\begin{itemize}
\item[i)] $16(C_nC_m-B_nB_m)=7C_{n+m}+9C_{n-m}$;
\item[ii)] $16(C_nC_m+B_nB_m)=9C_{n+m}+7C_{n-m}$.
\end{itemize}
\end{thm}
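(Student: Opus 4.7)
The plan is to prove both identities via direct computation with the Binet formulas, using the key observation that $\lambda_1\lambda_2 = (3+\sqrt{8})(3-\sqrt{8}) = 1$. This means $\lambda_2 = \lambda_1^{-1}$, so cross terms collapse neatly: $\lambda_1^n\lambda_2^m = \lambda_1^{n-m}$ and $\lambda_1^m\lambda_2^n = \lambda_2^{n-m}$ when $n \geq m$. This simplification is the only nontrivial ingredient; nothing else needs $n \geq m$ and no parity hypothesis is required.

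First I would expand $C_nC_m$ using $C_k = (\lambda_1^k+\lambda_2^k)/2$. After multiplying out the four terms and applying $\lambda_1\lambda_2=1$ to the two cross products, the expression reduces to
\[
C_nC_m = \frac{\lambda_1^{n+m}+\lambda_2^{n+m}+\lambda_1^{n-m}+\lambda_2^{n-m}}{4} = \frac{C_{n+m}+C_{n-m}}{2}.
\]
In parallel, I would expand $B_nB_m$ using $B_k=(\lambda_1^k-\lambda_2^k)/(\lambda_1-\lambda_2)$ and recall $(\lambda_1-\lambda_2)^2 = 32$. The cross terms now enter with a minus sign, giving
\[
B_nB_m = \frac{\lambda_1^{n+m}+\lambda_2^{n+m}-\lambda_1^{n-m}-\lambda_2^{n-m}}{32} = \frac{C_{n+m}-C_{n-m}}{16}.
\]

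Multiplying the first identity by $16$ and the second by $16$, I obtain $16C_nC_m = 8C_{n+m}+8C_{n-m}$ and $16B_nB_m = C_{n+m}-C_{n-m}$. Subtracting and adding these two relations yields
\[
16(C_nC_m - B_nB_m) = 7C_{n+m}+9C_{n-m}, \qquad 16(C_nC_m + B_nB_m) = 9C_{n+m}+7C_{n-m},
\]
which are exactly (i) and (ii). There is no genuine obstacle here; the only step that requires care is remembering to use $\lambda_1\lambda_2=1$ to reduce the cross terms to $C_{n-m}$ (rather than leaving them as a spurious mixed product), and correctly plugging in $(\lambda_1-\lambda_2)^2 = 32$ so that the denominator $32$ becomes a $16$ after the factor-of-two rewrite into $C_{n+m}-C_{n-m}$.
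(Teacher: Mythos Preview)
Your proof is correct and follows essentially the same approach as the paper: both arguments expand $C_nC_m$ and $B_nB_m$ via the Binet formulas, use $\lambda_1\lambda_2=1$ and $(\lambda_1-\lambda_2)^2=32$ to reduce the cross terms, and read off the coefficients $7$ and $9$. Your organization is marginally cleaner in that you isolate $C_nC_m=\tfrac{1}{2}(C_{n+m}+C_{n-m})$ and $B_nB_m=\tfrac{1}{16}(C_{n+m}-C_{n-m})$ first and then add and subtract, which yields both (i) and (ii) simultaneously, whereas the paper combines them from the start and then remarks that (ii) follows by a similar calculation.
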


\begin{proof}
Applying Binet forms, we have
\begin{align*}
C_nC_m&-B_nB_m=\frac{\lambda_1^n+\lambda_2^n}{2}.\frac{\lambda_1^m+\lambda_2^m}{2}-\frac{\lambda_1^n-\lambda_2^n}{\lambda_1-\lambda_2}.\frac{\lambda_1^m-\lambda_2^m}{\lambda_1-\lambda_2}\\
&=\frac{\lambda_1^{n+m}+\lambda_2^{n+m}+\lambda_1^{n-m}+\lambda_2^{n-m}}{4}-\frac{\lambda_1^{n+m}+\lambda_2^{n+m}-\lambda_1^{n-m}-\lambda_2^{n-m}}{32}\\
&=\frac{7}{16}.\frac{\lambda_1^{n+m}+\lambda_2^{n+m}}{2}+\frac{9}{16}.\frac{\lambda_1^{n-m}+\lambda_2^{n-m}}{2}=\frac{7C_{n+m}+9C_{n-m}}{16}.
\end{align*}
Hence we get the first identity. The second is proved by similar calculations.
\end{proof}

%%%%%%%%
Motived by the above results, we establish some identities of balancing, cobalancing, Lucas-balancing and Lucas-cobalancing numbers. We also obtain some properties on the parity of these numbers. The following theorem give us relations between sums of Lucas-balancing numbers and Lucas-cobalancing numbers or cobalancing numbers.

\begin{thm}\label{c_nc_m, b_nb_m}
For $n,m$ are integers such that $n\geq m\geq 1$, we have
\begin{itemize}
\item[i)] $C_{n+m-1}-C_{n-m}=2c_nc_m$;
\item[ii)] $C_{n+m-1}+C_{n-m}=16b_nb_m+8(b_n+b_m)+4$.
\end{itemize}
\end{thm}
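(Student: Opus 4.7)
The plan is to imitate the approach used throughout the paper: reduce both sides to Binet expressions and match them term by term. The essential algebraic observation I would exploit is that the bases of the two Binet families are related by
\[
\alpha_1^{2} = (1+\sqrt{2})^{2} = 3+\sqrt{8} = \lambda_1, \qquad \alpha_2^{2} = 3-\sqrt{8} = \lambda_2, \qquad \alpha_1\alpha_2 = -1.
\]
These identities let me convert any expression of the form $\alpha_i^{2k}$ into $\lambda_i^{k}$, and control mixed products $\alpha_1^{2n-1}\alpha_2^{2m-1}$ via $\alpha_1\alpha_2=-1$.

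For part (i), I would multiply the Binet forms for $c_n$ and $c_m$:
\[
2c_nc_m = \tfrac{1}{2}\bigl[\alpha_1^{2n-1}\alpha_1^{2m-1} + \alpha_2^{2n-1}\alpha_2^{2m-1} + \alpha_1^{2n-1}\alpha_2^{2m-1} + \alpha_2^{2n-1}\alpha_1^{2m-1}\bigr].
\]
The first two summands collapse to $\lambda_1^{n+m-1}+\lambda_2^{n+m-1}$. For the cross terms, factor out the smaller power and use $\alpha_1\alpha_2=-1$: for instance $\alpha_1^{2n-1}\alpha_2^{2m-1} = (\alpha_1\alpha_2)^{2m-1}\alpha_1^{2(n-m)} = -\lambda_1^{n-m}$. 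After a symmetric calculation for the other cross term, the whole expression reduces to $\tfrac12(\lambda_1^{n+m-1}+\lambda_2^{n+m-1}) - \tfrac12(\lambda_1^{n-m}+\lambda_2^{n-m}) = C_{n+m-1}-C_{n-m}$.

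For part (ii), the pivotal algebraic move is to rewrite the right-hand side in factored form. Since
\[
16b_nb_m+8(b_n+b_m)+4 = 4(2b_n+1)(2b_m+1),
\]
and from the Binet formula $2b_n+1 = \dfrac{\alpha_1^{2n-1}-\alpha_2^{2n-1}}{2\sqrt{2}}$, the right-hand side becomes
\[
4\cdot\frac{(\alpha_1^{2n-1}-\alpha_2^{2n-1})(\alpha_1^{2m-1}-\alpha_2^{2m-1})}{8}.
\]
Expanding this product and again using $\alpha_1^2=\lambda_1$, $\alpha_2^2=\lambda_2$, $\alpha_1\alpha_2=-1$ gives, after the cross terms this time contribute with a $+$ sign, exactly $C_{n+m-1}+C_{n-m}$.

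The only real obstacle is recognizing the factorization $16b_nb_m+8(b_n+b_m)+4 = 4(2b_n+1)(2b_m+1)$, which is what makes (ii) reduce to a clean Binet computation parallel to (i); once that is seen, everything else is mechanical bookkeeping with $\alpha_1\alpha_2=-1$ and $\alpha_i^2=\lambda_i$.
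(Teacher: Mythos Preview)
Your proof is correct and follows the same Binet-form approach as the paper, which carries out (i) exactly as you do (multiply the Binet expressions, use $\alpha_1\alpha_2=-1$ on the cross terms, and identify $\alpha_i^{2k}$ with $\lambda_i^{k}$) and then dismisses (ii) with ``Similarly, we can prove the second identity.'' Your factorization $16b_nb_m+8(b_n+b_m)+4=4(2b_n+1)(2b_m+1)$ is a tidy way to make that ``similarly'' explicit, rendering the computation for (ii) exactly parallel to (i) rather than expanding the product $\bigl(\tfrac{\alpha_1^{2n-1}-\alpha_2^{2n-1}}{4\sqrt{2}}-\tfrac12\bigr)\bigl(\tfrac{\alpha_1^{2m-1}-\alpha_2^{2m-1}}{4\sqrt{2}}-\tfrac12\bigr)$ directly.
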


\begin{proof}
Using Binet forms with remark that $\alpha_1\alpha_2=-1$, we have
\begin{align*}
c_nc_m&=\frac{\alpha_1^{2n-1}+\alpha_2^{2n-1}}{2}.\frac{\alpha_1^{2m-1}+\alpha_2^{2m-1}}{2}\\
&=\frac{\alpha_1^{2(n+m-1)}+\alpha_2^{2(n+m-1)}}{2}-\frac{\alpha_1^{2(n-m)}+\alpha_2^{2(n-m)}}{2}\\
&=\frac{1}{2}(C_{n+m-1}-C_{n-m}).
\end{align*}
Hence we obtain the first identity. Similarly, we can prove the second identity.
\end{proof}

By ii) of Theorem \ref{c_nc_m, b_nb_m}, we have the following consequence about sum of two consecutive Lucas-balancing numbers.

\begin{cor}
For all integer $n\geq 1$, the sum of $(n-1)^{th}$ and $n^{th}$ Lucas-balancing numbers is divisible by 4.
\end{cor}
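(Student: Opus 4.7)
The plan is to obtain the corollary as a direct specialization of part ii) of Theorem \ref{c_nc_m, b_nb_m} by setting the free index $m = 1$. With that substitution, the left-hand side becomes $C_{n+1-1} + C_{n-1} = C_n + C_{n-1}$, which is exactly the sum of the $n^{th}$ and $(n-1)^{th}$ Lucas-balancing numbers, so the identity directly rewrites this sum in terms of cobalancing numbers.

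The next step is to simplify the right-hand side using the initial value $b_1 = 0$, which was given right after the definition of cobalancing numbers. Substituting $m = 1$ and $b_1 = 0$ into $16 b_n b_m + 8(b_n + b_m) + 4$, the first term vanishes, one summand in the second term vanishes, and the expression collapses to $8 b_n + 4$. Factoring yields $C_n + C_{n-1} = 4(2 b_n + 1)$, which is manifestly divisible by $4$.

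There is no real obstacle: the entire argument is one substitution plus the observation $b_1 = 0$. The only thing worth checking is that the hypothesis $n \geq m \geq 1$ of Theorem \ref{c_nc_m, b_nb_m} is satisfied, which requires $n \geq 1$, exactly the range stated in the corollary. As a small bonus, the stronger conclusion $C_{n-1} + C_n \equiv 4 \pmod{8}$ (since $2b_n + 1$ is odd) falls out of the same identity, although this is not needed for the statement as written.
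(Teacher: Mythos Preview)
Your proof is correct and follows exactly the route the paper intends: the corollary is stated immediately after Theorem~\ref{c_nc_m, b_nb_m} as a consequence of part~ii), and specializing $m=1$ (together with $b_1=0$) is the natural way to extract it. The paper gives no further detail, so your argument is in fact more explicit than the original; note also that even without invoking $b_1=0$, the right-hand side $16b_nb_m+8(b_n+b_m)+4=4(4b_nb_m+2b_n+2b_m+1)$ is visibly a multiple of $4$, so divisibility by $4$ already follows before any simplification.
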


The following theorem is an interesting property of sums of two cobalancing numbers from which we can see the parity of cobalancing numbers.

%%%%%%
\begin{thm}\label{b_n-b_m}
Let $n,m$ be positive integers.
\begin{itemize}
\item[i)] If $n>m$ then $b_{n+m}-b_{n-m}=2c_nB_m$;
\item[ii)] If $n\leq m$ then $b_{n+m}-b_{m-n+1}=2c_nB_m$.
\end{itemize}
\end{thm}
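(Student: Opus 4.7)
The plan is to prove both parts by direct computation from the Binet formulas, exploiting the three relations
\[
\alpha_1^{2}=3+2\sqrt{2}=\lambda_1,\qquad \alpha_2^{2}=\lambda_2,\qquad \alpha_1\alpha_2=-1.
\]
The first two identities (which hold because $\alpha_i=1\pm\sqrt{2}$ and $\lambda_i=3\pm\sqrt{8}$) let me rewrite $B_m=(\alpha_1^{2m}-\alpha_2^{2m})/(4\sqrt{2})$, so that $c_n$, $B_m$, and the $b_k$'s all live in the same algebra generated by $\alpha_1,\alpha_2$. Expanding the product gives
\[
2c_nB_m=\frac{\alpha_1^{2(n+m)-1}-\alpha_2^{2(n+m)-1}}{4\sqrt{2}}+\frac{\alpha_2^{2n-1}\alpha_1^{2m}-\alpha_1^{2n-1}\alpha_2^{2m}}{4\sqrt{2}}.
\]
The first fraction is exactly $b_{n+m}+\tfrac{1}{2}$ by the Binet form of the cobalancing numbers, so the entire argument reduces to simplifying the mixed-exponent terms via $\alpha_1\alpha_2=-1$.

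For part i), when $n>m$, the exponent $2m$ is strictly less than $2n-1$, so I factor $\alpha_1^{2n-1}\alpha_2^{2m}=\alpha_1^{2(n-m)-1}(\alpha_1\alpha_2)^{2m}=\alpha_1^{2(n-m)-1}$, using that the exponent $2m$ on $\alpha_1\alpha_2$ is even. The same simplification applies to $\alpha_2^{2n-1}\alpha_1^{2m}=\alpha_2^{2(n-m)-1}$. The mixed-term fraction therefore collapses to $-(\alpha_1^{2(n-m)-1}-\alpha_2^{2(n-m)-1})/(4\sqrt{2})=-(b_{n-m}+\tfrac{1}{2})$, and the $\tfrac{1}{2}$ constants cancel to give $b_{n+m}-b_{n-m}$.

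For part ii), when $n\leq m$, the roles are reversed: I factor out $\alpha_i^{2n-1}$, leaving the exponent $2m-(2n-1)=2(m-n+1)-1$ on the other root. The decisive difference from case i) is that now $(\alpha_1\alpha_2)^{2n-1}=-1$, because the exponent that appears on $\alpha_1\alpha_2$ is odd. This extra sign propagates through both mixed terms so that the mixed-term fraction collapses to $-(\alpha_1^{2(m-n+1)-1}-\alpha_2^{2(m-n+1)-1})/(4\sqrt{2})=-(b_{m-n+1}+\tfrac{1}{2})$, yielding $b_{n+m}-b_{m-n+1}$ after cancellation.

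The main obstacle is purely bookkeeping. The apparently mysterious shift $m-n+1$ rather than the naive $m-n$ emerges automatically from the arithmetic $2m-(2n-1)=2(m-n+1)-1$, and the whole case split is forced by the parity of the exponent on $\alpha_1\alpha_2=-1$: in case i) that exponent is $2m$ (even), in case ii) it is $2n-1$ (odd). No identity beyond those already used in the proof of Theorem \ref{c_nc_m, b_nb_m} is required.
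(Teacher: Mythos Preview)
Your proof is correct and follows essentially the same approach as the paper: both compute $c_nB_m$ via the Binet formulas, use $\alpha_1\alpha_2=-1$ to collapse the mixed-exponent terms, and split into cases according to the sign of $n-m$. The paper's write-up is terser (it passes silently through negative exponents and suppresses the cancellation of the $\tfrac12$ constants), whereas you make the bookkeeping explicit, but the underlying argument is the same.
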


\begin{proof}
By Binet formulas, we have
\begin{align*} 
c_nB_m&=\frac{\alpha_1^{2n-1}+\alpha_2^{2n-1}}{2}.\frac{\alpha_1^{2m}-\alpha_2^{2m}}{4\sqrt{2}}\\
&=\frac{\alpha_1^{2(n+m)-1}-\alpha_2^{2(n+m)-1}}{8\sqrt{2}}-\frac{\alpha_1^{2(n-m)-1}-\alpha_2^{2(n-m)-1}}{8\sqrt{2}}\\
&=\begin{cases} \displaystyle\frac{1}{2}(b_{n+m}-b_{n-m}), \text { if } n>m;\\ \displaystyle \frac{1}{2}(b_{n+m}-b_{m-n+1}), \text { otherwise.}\end{cases}
\end{align*}
Hence we have what was to be demonstrated. 
\end{proof}

\begin{cor}
The cobalancing numbers are even. Moreover, for all $m\geq 1$, the difference between the $(2m+1)^{th}$ and $(2m)^{th}$ cobalancing numbers  is divisible by 4.
\end{cor}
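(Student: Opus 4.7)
The plan is to derive both statements directly from Theorem~\ref{b_n-b_m}, together with Corollary~\ref{parity-B_n} and the initial values $b_1=0$, $b_2=2$.

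For the first assertion, I would specialize part~(i) of Theorem~\ref{b_n-b_m} by fixing the second parameter to $1$: for every $n\geq 2$ it yields
\[ b_{n+1}-b_{n-1}=2c_n B_1=2c_n, \]
so $b_{n+1}\equiv b_{n-1}\pmod{2}$. Since $b_1=0$ and $b_2=2$ are both even, a straightforward induction (carried out separately on odd and even indices) then shows that every cobalancing number is even.

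For the divisibility-by-$4$ claim, I would invoke part~(ii) of Theorem~\ref{b_n-b_m} with the theorem's $n$ set to $1$ and the theorem's $m$ set to $2m$; the hypothesis $n\leq m$ becomes $1\leq 2m$, which holds for $m\geq 1$. The indices on the left read $n+m=2m+1$ and $m-n+1=2m$, so the identity becomes
\[ b_{2m+1}-b_{2m}=2c_1 B_{2m}=2B_{2m}. \]
By Corollary~\ref{parity-B_n}, $B_{2m}$ has the same parity as $2m$ and is therefore even, whence $b_{2m+1}-b_{2m}$ is divisible by~$4$, as required.

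I do not anticipate any real obstacle: the whole argument consists of choosing the right specialization of Theorem~\ref{b_n-b_m} so that the right-hand side is visibly a multiple of~$2$ in the first case and of~$4$ in the second, the latter multiplicity being supplied by the already established parity of the balancing numbers.
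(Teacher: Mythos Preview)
Your argument is correct and follows essentially the same approach as the paper: both proofs specialize Theorem~\ref{b_n-b_m} and invoke Corollary~\ref{parity-B_n}. The only minor difference is in the first assertion, where the paper applies part~(ii) with parameter $n=1$ to obtain $b_{k+1}-b_k=2c_1B_k=2B_k$ directly for consecutive indices (so only the single initial value $b_1=0$ is needed), whereas you use part~(i) to step by two and must seed the induction with both $b_1=0$ and $b_2=2$; your treatment of the second assertion is identical to the paper's.
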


\begin{proof}
By ii) of Theorem \ref{b_n-b_m}, we can see that $b_n$ and $b_{n+1}$ have the same parity for all $n\geq 1$. It follows that $b_n$ is even for all $n\geq 1$ since $b_1=0$. Moreover, from ii) of Theorem \ref{b_n-b_m}, we also obtain the second affirmation since $B_{2m}$ is even by Corollary \ref{parity-B_n}.
\end{proof}

The following theorem is another property of sums of two cobalancing numbers.

%%%%%%
\begin{thm}
Let $n,m$ be positive integers.
\begin{itemize}
\item[i)] If $n>m$ then $b_{n+m}+b_{n-m}=2b_nC_m+C_m-1$;
\item[ii)] If $n\leq m$ then $b_{n+m}-b_{m-n+1}=2b_nC_m+C_m-1$.
\end{itemize}
\end{thm}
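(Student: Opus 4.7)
The plan is to mimic the Binet-form calculation used to prove Theorem~\ref{b_n-b_m}, this time expanding $2b_nC_m$ rather than $2c_nB_m$. Using $\alpha_1^2=\lambda_1$, $\alpha_2^2=\lambda_2$, and the Binet formulas, I would compute
\begin{align*}
2b_nC_m &= \left(\frac{\alpha_1^{2n-1}-\alpha_2^{2n-1}}{4\sqrt{2}}-\frac{1}{2}\right)\bigl(\alpha_1^{2m}+\alpha_2^{2m}\bigr)\\
&= \frac{(\alpha_1^{2n-1}-\alpha_2^{2n-1})(\alpha_1^{2m}+\alpha_2^{2m})}{4\sqrt{2}} - C_m.
\end{align*}
Expanding the numerator and collapsing the mixed terms via $\alpha_1^{2n-1}\alpha_2^{2m}=\alpha_1^{2(n-m)-1}$ and $\alpha_2^{2n-1}\alpha_1^{2m}=\alpha_2^{2(n-m)-1}$ (valid because $(\alpha_1\alpha_2)^{2m}=1$), the numerator splits cleanly as
\[\bigl(\alpha_1^{2(n+m)-1}-\alpha_2^{2(n+m)-1}\bigr)+\bigl(\alpha_1^{2(n-m)-1}-\alpha_2^{2(n-m)-1}\bigr).\]

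After dividing by $4\sqrt{2}$, the first summand is exactly $b_{n+m}+\tfrac12$ by the Binet formula for cobalancing numbers, so the whole task reduces to identifying the second summand in the two regimes of $n$ versus $m$.

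The delicate step — and the main obstacle I anticipate — is handling the second Binet-type quotient depending on the sign of $2(n-m)-1$. When $n>m$, the exponent is at least $1$, so this quotient is directly $b_{n-m}+\tfrac12$ and reassembling yields $2b_nC_m=b_{n+m}+b_{n-m}+1-C_m$, which is case~i). When $n\le m$, the exponent is negative, so I would invoke $\alpha_1^{-1}=-\alpha_2$ and $\alpha_2^{-1}=-\alpha_1$ (both immediate from $\alpha_1\alpha_2=-1$) to obtain $\alpha_1^{-(2k+1)}-\alpha_2^{-(2k+1)}=\alpha_1^{2k+1}-\alpha_2^{2k+1}$ for any $k\ge 0$. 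Setting $k=m-n$ and noting $2(m-n)+1=2(m-n+1)-1$, the quotient becomes $b_{m-n+1}+\tfrac12$, producing case~ii). Beyond this careful sign-tracking in the negative-exponent regime, the computation runs entirely in parallel with the proof of Theorem~\ref{b_n-b_m}.
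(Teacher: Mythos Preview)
Your approach is correct and mirrors the paper's proof exactly: the paper computes $b_nC_m$ via Binet forms, splits the product into the two quotients with exponents $2(n+m)-1$ and $2(n-m)-1$, and handles the negative-exponent case via $\alpha_1\alpha_2=-1$ just as you do. One caution: your computation (and the paper's own proof) actually yields $b_{n+m}+b_{m-n+1}=2b_nC_m+C_m-1$ in case~ii), with a \emph{plus} sign---the minus sign in the stated theorem is a typo, as the check $n=1,\,m=2$ confirms ($b_3+b_2=16=2b_1C_2+C_2-1$ while $b_3-b_2=12$).
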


\begin{proof}
By Binet forms, we have
\begin{align*}
b_nC_m&=\left(\frac{\alpha_1^{2n-1}-\alpha_2^{2n-1}}{4\sqrt{2}}-\frac{1}{2}\right).\frac{\alpha_1^{2m}+\alpha_2^{2m}}{2}\\
&=\frac{\alpha_1^{2(n+m)-1}-\alpha_2^{2(n+m)-1}}{8\sqrt{2}}+\frac{\alpha_1^{2(n-m)-1}-\alpha_2^{2(n-m)-1}}{8\sqrt{2}}-\frac{1}{2}C_m\\
&=\begin{cases}\displaystyle\frac{1}{2}(b_{n+m}+b_{n-m}-C_m+1), \text { if } n>m;\\ \displaystyle\frac{1}{2}(b_{n+m}+b_{m-n+1}-C_m+1), \text { otherwise.}\end{cases}
\end{align*}
Hence we obtain the required identities.
\end{proof}

In the following theorem, we again get an identity of cobalancing and Lucas-cobalancing numbers which looks like the trigonometric identity $\displaystyle \sin x+\sin y=2\sin(\frac{x+y}{2})\cos(\frac{x-y}{2})$.

%%%%%
\begin{thm}\label{c_n-c_m}
Let $n,m$ be positive integers.
\begin{itemize}
\item[i)] If $n>m$ then $c_{n+m}+c_{n-m}=2c_nC_m$;
\item[ii)] If $n\leq m$ then $c_{n+m}-c_{m-n+1}=2c_nC_m$.
\end{itemize}
\end{thm}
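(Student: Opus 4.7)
The plan is to mimic the approach used throughout the paper: evaluate $2c_nC_m$ directly via the Binet formulas and reorganise the result to match the right-hand side, splitting into cases according to whether $n>m$ or $n\le m$. The crucial observation, implicit already in the earlier proofs, is that $\lambda_1=(1+\sqrt{2})^2=\alpha_1^{2}$ and $\lambda_2=\alpha_2^{2}$, so that the Lucas-balancing number can be rewritten as $C_m=(\alpha_1^{2m}+\alpha_2^{2m})/2$. This puts $c_n$ and $C_m$ on a common footing and lets us expand everything in powers of $\alpha_1,\alpha_2$ alone, exactly as was done in Theorems \ref{c_nc_m, b_nb_m} and \ref{b_n-b_m}.

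First I would expand
\[
2c_nC_m=\tfrac{1}{2}\bigl(\alpha_1^{2n-1}+\alpha_2^{2n-1}\bigr)\bigl(\alpha_1^{2m}+\alpha_2^{2m}\bigr),
\]
separating the four terms into the \emph{diagonal} pair $\alpha_1^{2(n+m)-1}+\alpha_2^{2(n+m)-1}$ and the \emph{cross} pair $\alpha_1^{2n-1}\alpha_2^{2m}+\alpha_2^{2n-1}\alpha_1^{2m}$. The diagonal pair, after the factor $1/2$, is exactly $c_{n+m}$, regardless of the relative size of $n$ and $m$.

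Next I would simplify the cross terms using $\alpha_1\alpha_2=-1$. If $n>m$, then pulling out $(\alpha_1\alpha_2)^{2m}=1$ gives $\alpha_1^{2n-1}\alpha_2^{2m}=\alpha_1^{2(n-m)-1}$ and similarly for the other term, so the cross contribution equals $c_{n-m}$, yielding part i). If instead $n\le m$, the exponent $2(n-m)-1$ is negative, so I would pull out $(\alpha_1\alpha_2)^{2n-1}=-1$ instead, obtaining $\alpha_1^{2n-1}\alpha_2^{2m}=-\alpha_2^{2(m-n)+1}$ and $\alpha_2^{2n-1}\alpha_1^{2m}=-\alpha_1^{2(m-n)+1}$; the cross contribution is then $-c_{m-n+1}$, which gives part ii).

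The only step requiring real care, and the place where a mistake is easy to make, is the sign-tracking in the case $n\le m$: one must correctly identify which of $(\alpha_1\alpha_2)^{2m}$ or $(\alpha_1\alpha_2)^{2n-1}$ is convenient to factor out, and remember that the latter carries a minus sign because its exponent is odd. Once the sign bookkeeping is done, the two cases combine with the diagonal contribution to yield the stated identities verbatim, so no further work is needed.
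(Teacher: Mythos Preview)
Your proposal is correct and follows essentially the same route as the paper: expand $c_nC_m$ via the Binet formulas (writing $C_m=(\alpha_1^{2m}+\alpha_2^{2m})/2$), identify the diagonal pair as $c_{n+m}$, and use $\alpha_1\alpha_2=-1$ on the cross terms to obtain $c_{n-m}$ or $-c_{m-n+1}$ according to the sign of $n-m$. The only cosmetic difference is that the paper writes the cross contribution uniformly as $\tfrac14(\alpha_1^{2(n-m)-1}+\alpha_2^{2(n-m)-1})$ and then resolves the negative-exponent case implicitly, whereas you make the two factorizations explicit.
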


\begin{proof}
By Binet forms, we have
\begin{align*}
c_nC_m&=\frac{\alpha_1^{2n-1}+\alpha_2^{2n-1}}{2}.\frac{\alpha_1^{2m}+\alpha_2^{2m}}{2}\\
&=\frac{\alpha_1^{2(n+m)-1}+\alpha_2^{2(n+m)-1}}{4}+\frac{\alpha_1^{2(n-m)-1}+\alpha_2^{2(n-m)-1}}{4}\\
&=\begin{cases}\displaystyle\frac{1}{2}(c_{n+m}+c_{n-m}), \text{ if } n>m;\\ \displaystyle\frac{1}{2}(c_{n+m}-c_{m-n+1}), \text{ otherwise.}\end{cases}
\end{align*}
This completes the proof.
\end{proof}

We can deduce the parity of the Lucas-cobalancing numbers from the second identity of the previous theorem.

\begin{cor}
The Lucas-cobalancing numbers are odd.
\end{cor}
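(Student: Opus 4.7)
The plan is to exploit the second identity of Theorem \ref{c_n-c_m} in the boundary case $n=1$. Setting $n=1$ and letting $m\geq 1$ be arbitrary satisfies the hypothesis $n\leq m$, and since $c_1=1$ and $C_m$ is an integer, the identity reduces to
\[c_{m+1}-c_{m}=2c_1C_m=2C_m,\]
so $c_{m+1}\equiv c_m\pmod 2$ for every $m\geq 1$.

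From this congruence I would conclude by a one-line induction: the parity of $c_n$ is constant in $n$, and $c_1=1$ is odd, hence every Lucas-cobalancing number is odd.

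There is no real obstacle here — the only thing to verify is that the specialization $n=1$ genuinely falls under case ii) (it does, since $1\leq m$) and that $c_1=1$, which is part of the recurrence data recalled in the introduction. The step that does the work is Theorem \ref{c_n-c_m} ii) itself; the corollary is essentially a parity reading of that identity.
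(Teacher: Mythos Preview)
Your argument is correct and matches the paper's own proof: both use identity ii) of Theorem~\ref{c_n-c_m} (specialized so that consecutive Lucas-cobalancing numbers differ by an even integer) together with $c_1=1$ to conclude that all $c_n$ are odd.
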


\begin{proof}
By ii) of Theorem \ref{c_n-c_m}, we can see that $c_n$ and $c_{n+1}$ have the same parity for all $n\geq 1$. It follows that $c_n$ is odd for all $n\geq 1$ since $c_1=1$.
\end{proof}

The following theorem give us a better property on the parity of Lucas-cobalancing numbers of even index. It shows that the  $(2n)^{th}$ Lucas-cobalancing number is congruent to $-1$ modulo $8$ and the  $(4n)^{th}$ Lucas-cobalancing number is congruent to $-1$ modulo $16$, for all $n\geq 1$.

%%%%%%
\begin{thm}
For integer $n\geq 1$, we have
\[c_{2n}+1=8(2b_n+1)B_n.\]
\end{thm}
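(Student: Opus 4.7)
The plan is a direct Binet-formula computation, exploiting the key observation that the two quadratic irrationals appearing in the balancing and cobalancing Binet forms are closely related, namely $\lambda_1 = (1+\sqrt{2})^2 = \alpha_1^2$ and $\lambda_2 = \alpha_2^2$. With this, both $B_n$ and the quantity $2b_n+1$ can be written as differences of powers of $\alpha_1$ and $\alpha_2$ sharing a common base, which makes their product easy to expand.

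First I would record the two rewrites
\[
B_n = \frac{\alpha_1^{2n} - \alpha_2^{2n}}{4\sqrt{2}}, \qquad 2b_n + 1 = \frac{\alpha_1^{2n-1} - \alpha_2^{2n-1}}{2\sqrt{2}},
\]
the first using $\lambda_i = \alpha_i^2$ and $\lambda_1 - \lambda_2 = 4\sqrt{2}$, the second a trivial rearrangement of the Binet form for $b_n$. Multiplying these gives
\[
(2b_n+1)B_n = \frac{(\alpha_1^{2n-1} - \alpha_2^{2n-1})(\alpha_1^{2n} - \alpha_2^{2n})}{16}.
\]

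Next I would expand the numerator. The cross terms are $\alpha_1^{2n-1}\alpha_2^{2n}$ and $\alpha_2^{2n-1}\alpha_1^{2n}$; using $\alpha_1\alpha_2 = -1$ and that $2n-1$ is odd, these collapse to $-\alpha_2$ and $-\alpha_1$ respectively. Since $\alpha_1 + \alpha_2 = 2$, the numerator reduces to $\alpha_1^{4n-1} + \alpha_2^{4n-1} + 2$. Multiplying through by $8$ then yields
\[
8(2b_n+1)B_n = \frac{\alpha_1^{4n-1} + \alpha_2^{4n-1}}{2} + 1 = c_{2n} + 1,
\]
which is the claimed identity.

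There is no real obstacle; the only thing to be careful about is the bookkeeping of exponent parities when applying $\alpha_1\alpha_2 = -1$ to the cross terms, and the fact that the ``$+1$'' on the right of the target identity comes precisely from $\alpha_1 + \alpha_2 = 2$. The congruence consequences stated in the theorem's introductory sentence then follow immediately from Corollary~\ref{parity-B_n} (which gives $2\mid B_n$ when $n$ is even, hence $4\mid B_{2n}$) and the fact that $2b_n+1$ is odd.
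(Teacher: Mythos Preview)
Your proof is correct and follows essentially the same Binet-formula approach as the paper: both compute the product via $\alpha_1,\alpha_2$ and use $\alpha_1\alpha_2=-1$ to collapse the cross terms, with the only cosmetic difference being that you multiply $(2b_n+1)B_n$ directly while the paper computes $b_nB_n$ first and then rearranges. The final observation on the congruence consequences is also correct.
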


\begin{proof}
By Binet forms, we have
\begin{align*}
b_nB_n&=\left(\frac{\alpha_1^{2n-1}-\alpha_2^{2n-1}}{4\sqrt{2}}-\frac{1}{2}\right).\frac{\alpha_1^{2n}-\alpha_2^{2n}}{4\sqrt{2}}\\
&=\frac{\alpha_1^{4n-1}+\alpha_2^{4n-1}}{32}-\frac{\alpha_1^{-1}+\alpha_2^{-1}}{32}-\frac{1}{2}B_n\\
&=\frac{1}{16}(c_{2n}+1)-\frac{1}{2}B_n.
\end{align*}
Hence we obtain the required identity.
\end{proof}

%======================================

%}

\bigskip
\hrule
\bigskip

\noindent MSC2010:
11B37, 11B83.

\noindent \emph{Keywords: }
Balancing number, Lucas-balancing number, cobalancing number, Lucas-cobalancing number, recurrence relation, the parity, trigonometric-type identity

\end{document}